\theoremstyle{plain} 
\newtheorem{theorem}{\indent\sc Theorem}[section]
\newtheorem{lemma}[theorem]{\indent\sc Lemma}
\newtheorem{corollary}[theorem]{\indent\sc Corollary}
\newtheorem{proposition}[theorem]{\indent\sc Proposition}
\theoremstyle{definition} 
\newtheorem{definition}[theorem]{\indent\sc Definition}
\newtheorem{example}[theorem]{\indent\sc Example}
\begin{document}
\title[Geodesics of toric psh functions]{Local geodesics between toric plurisubharmonic functions with infinite energy} 

\author[G. Hosono]{Genki Hosono} 

\subjclass[2010]{ 
Primary 32U05; Secondary 32U25. 
}
%
\keywords{ 
Weak geodesics, toric plurisubharmonic functions.
}
\address{
Graduate School of Mathematical Sciences, The University of Tokyo \endgraf
3-8-1 Komaba, Meguro-ku, Tokyo, 153-8914 \endgraf
Japan
}
\email{genkih@ms.u-tokyo.ac.jp}
\maketitle
\begin{abstract}
Our interest is the behavior of weak geodesics between two plurisubharmonic functions on pseudoconvex domains.
We characterize the convergence condition along the geodesic between toric psh functions with a pole at origin on a unit ball in $\mathbb{C}^n$ by means of Lelong numbers.
\end{abstract}

\section{Introduction}
In \cite{Mab}, Mabuchi defined a metric on the space of K\"{a}hler potentials on a compact complex manifold.
After that, the geodesics between K\"{a}hler potentials are studied.
By Donaldson (\cite{Donaldson}) and Semmes (\cite{Semmes}), the geodesics are written as a solution of a complex Monge-Amp\`{e}re equation.
To study them, weak geodesics are introduced as an envelope of functions with appropriate conditions (for more details, see \cite{Darvas}).
As an analogue on a pseudoconvex domain, in \cite{Rashgeod}, weak geodesics in the space of plurisubharmonic functions ({\it psh} functions for short) are studied.
Since weak geodesics are defined in a pluripotential theoretic manner, it is natural to ask how it behaves on a pseudoconvex domain.

Here we give some definitions.
\begin{definition}\label{defgeodesic}
For two negative psh functions $u_0$ and $u_1$ on $\Omega$, we define the function $\hat{u}$ on $\Omega \times S$ by
\[\hat{u}:=\sup\left\{\hat{v}\in PSH(\Omega \times S): \hat{v} \leq 0, \limsup_{\log|\zeta| \to j} \hat{v}(z,\zeta) \leq u_j(z), j=0,1 \right\}.\]
We denote $$u_t:=\hat{u}(\cdot,e^t) \text{ for } 0<t<1$$ and we call this family of psh functions as a (weak) {\it geodesic}.
In this paper, we merely call it a geodesic.
\end{definition}

The geodesics $\{u_t\}$ have good properties when the endpoints $u_j$ are psh functions with mild singularities, specifically in the finite energy class $\mathcal{F}_1(\Omega)$.
For example, functions on the weak geodesic are convergent at its endpoints and the energy functional is linear along geodesics.
We note that psh functions in $\mathcal{F}_1(\Omega)$ do not have positive Lelong numbers.
On the other hand, we also know that the geodesics between functions which may have worse singularities have worse behavior.
In particular, the geodesic between 0 and a function with positive Lelong number has discontinuity at its endpoint.
Our interest is to analyze the behavior of the geodesic between two psh functions more closely.
In this paper, we describe the behavior of the geodesics between toric psh functions with poles at the origin.
We say that a psh function is {\it toric} if it depends only on $|z_1|,\ldots,|z_n|$. 
Our main theorem is as follows.

\begin{theorem}\label{geodlel}
Let $u_0$ and $u_1$ be psh functions on the unit ball in $\mathbb{C}^n$.
Assume that $u_0$ and $u_1$ depend only on $|z_1|,\ldots,|z_n|$, $u_j^{-1}(-\infty) \subset \{0\}$ and $\lim_{z\to \zeta}u_j(z)=0$ for every $\zeta \in \partial \Omega$ and each $j=0,1$.
Let $\{u_t\}_{0<t<1}$ be a geodesic between $u_0$ and $u_1$.
Then $u_t \to u_0$ holds in capacity as $t\to 0$ if and only if $\nu(u_0\circ \phi,0) \geq \nu(u_1 \circ \phi,0)$ holds for the every curve of the form $\phi: \zeta \mapsto (a_1 \zeta^{b_1},\ldots, a_n \zeta^{b_n} )$, where $a_i \in \mathbb{C}^*$ and $b_i \in \mathbb{Z}_{>0}$.
\end{theorem}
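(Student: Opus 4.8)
The plan is to trade pluripotential theory for convex analysis using the toric symmetry, reducing the entire statement to a comparison of recession functions, and only at the very end to translate back into Lelong numbers and convergence in capacity. Write $x_i=\log|z_i|$ and $\check u_j(x):=u_j(e^{x_1},\dots,e^{x_n})$; these are convex and nondecreasing in each variable on the image $L:=\{x:\sum_i e^{2x_i}<1\}$ of the ball, with $\check u_j\to 0$ at $\partial L$ and the only pole at $x\to-\infty$. Because the family defining $\hat u$ in Definition \ref{defgeodesic} is invariant under the torus action on $z$ and on $\zeta$, the extremal function $\hat u$ is itself toric, so it corresponds to a convex function $U(x,t)$ with $t=\log|\zeta|\in[0,1]$, namely the largest convex function on $L\times[0,1]$ lying below $0$ with $U(\cdot,0)\le\check u_0$ and $U(\cdot,1)\le\check u_1$. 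The first step I would carry out is the explicit formula for this convex envelope, the fiberwise infimal convolution of the two boundary graphs,
\[U(x,t)=\inf\bigl\{(1-t)\check u_0(x_0)+t\,\check u_1(x_1):(1-t)x_0+t x_1=x\bigr\}.\]

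Next I would compute the endpoint limit. Rescaling $x_1=x+w/t$ and sending $t\to0^+$ turns $t\,\check u_1(x_1)$ into the recession function $\check u_j^\infty(v):=\lim_{s\to\infty}\check u_j(x_0+sv)/s$ (finite precisely for $v$ in the closed negative orthant, since $L$ is bounded in the positive directions), giving
\[U(x,0^+):=\lim_{t\to0^+}U(x,t)=\inf_{w\in\mathbb{R}_{\ge0}^n}\bigl[\check u_0(x+w)+\check u_1^\infty(-w)\bigr].\]
Convexity of $U$ in $t$ forces $U(x,0^+)\le\check u_0(x)$, and since an infimal convolution of convex functions is convex, $U(\cdot,0^+)$ is the log-image $\check u_{0^+}$ of a psh function $u_{0^+}\le u_0$ with $u_t\to u_{0^+}$ pointwise.

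Now I would set up the Lelong dictionary. Along $\phi:\zeta\mapsto(a_1\zeta^{b_1},\dots,a_n\zeta^{b_n})$ one has, independently of the $a_i$,
\[\nu(u_j\circ\phi,0)=-\check u_j^\infty(-b)=:\ell_j(b)\ge0,\qquad b=(b_1,\dots,b_n).\]
The identity $U(x,0^+)=\check u_0(x)$ for every $x$ is equivalent to $\check u_0(x+w)-\check u_0(x)\ge\ell_1(w)$ for all $x$ and all $w\ge0$; and because the increments of a convex function decrease to its recession slope as $x\to-\infty$, one has $\inf_x\bigl[\check u_0(x+w)-\check u_0(x)\bigr]=\ell_0(w)$, so the condition holds for all $x$ iff $\ell_0(w)\ge\ell_1(w)$ for every $w\ge0$. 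By positive homogeneity and continuity of recession functions, testing on $b\in\mathbb{Z}_{>0}^n$ is equivalent to testing on the whole cone, which is exactly the stated Lelong condition. Hence $u_{0^+}=u_0$ iff the Lelong condition holds, and when it fails $u_{0^+}<u_0$ on a nonempty open neighborhood of the pole.

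Finally I would upgrade to capacity unconditionally. Fix a compact $K\subset\Omega$ and $\delta>0$. On $K\setminus B_r$, all functions are continuous and convex in $x$ and converge pointwise, hence $u_t\to u_{0^+}$ uniformly there, so this piece eventually contributes nothing; on $K\cap B_r\subset B_r$ one bounds $\mathrm{Cap}(\{|u_t-u_{0^+}|>\delta\}\cap B_r)\le\mathrm{Cap}(B_r)$ and lets $r\to0$ first. Thus $u_t\to u_{0^+}$ in capacity always, so $u_t\to u_0$ in capacity iff $u_{0^+}=u_0$, i.e. iff the Lelong condition holds; the converse direction uses that capacity convergence forces a.e. convergence of a subsequence to $u_0$, contradicting $u_t\to u_{0^+}<u_0$. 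I expect the main obstacle to be the rigorous identification of the toric envelope with the convex inf-convolution and the justified interchange of the infimum with the limit defining $U(x,0^+)$ — in particular handling the domain $L$ and the boundary normalization $u_j\to0$ — since everything downstream is bookkeeping with recession functions; the capacity upgrade near the pole is a secondary technical point.
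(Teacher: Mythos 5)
Your proposal is essentially correct but follows a genuinely different route from the paper. The paper never touches the geodesic itself: it invokes the pseudoconvex version of Darvas's Theorem 5.2 (\cite{Darvas}) as a black box to reduce the capacity convergence $u_t\to u_0$ to the envelope identity $P_{[u_1]}(u_0)=u_0$, rewrites that identity for the associated convex functions $f,g$ on $C_0$, and then proves the equivalence with the slope condition by Legendre duality --- an elementary one-dimensional argument restricted to rays for one implication, and the Newton body inclusion $\overline{\Gamma(f)}\subset\overline{\Gamma(g)}$ together with $\max(f^*,g^*-c)\downarrow f^*$ on $\mathrm{int}\,\Gamma(f)$ for the other. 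You instead compute the geodesic explicitly as the fiberwise infimal convolution $U(x,t)=\inf\{(1-t)\check u_0(x_0)+t\check u_1(x_1)\}$, pass to the limit $t\to 0^+$ to obtain $U(x,0^+)=\inf_{w\ge 0}[\check u_0(x+w)+\check u_1^\infty(-w)]$, and compare this with $\check u_0$ via recession slopes; this limit function is in fact the same object as $(\lim_c P(f,g+c))^{**}$ (its conjugate is $f^*+\iota_{\overline{\Gamma(g)}}$), so the two arguments meet in the middle. What your route buys is self-containedness and an explicit description of the limit $u_{0^+}=\lim_{t\to 0}u_t$ even when convergence to $u_0$ fails; what it costs is that the single load-bearing step --- that the psh envelope defining the geodesic is toric and coincides with this convex inf-convolution, and that the infimum commutes with the limit $t\to 0^+$ (most safely checked on the conjugate side, where $U(\cdot,t)^*=(1-t)\check u_0^*+t\check u_1^*$) --- is asserted rather than proved, and it is precisely the content that the paper outsources to \cite{Darvas} and \cite{Rashgeod}. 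You correctly flag this as the main obstacle; it is true and provable for toric data, but until it is written out your argument is a detailed plan rather than a proof. The remaining ingredients (the dictionary $\nu(u_j\circ\phi,0)=-\check u_j^\infty(-b)$, the identity $\inf_x[\check u_0(x+w)-\check u_0(x)]=\ell_0(w)$, the density/continuity argument extending from $b\in\mathbb{Z}_{>0}^n$ to the closed cone, and the capacity upgrade via local uniform convergence of convex functions off a small ball plus $\mathrm{Cap}(B_r)\to 0$) are all sound and correspond to steps the paper either performs or leaves implicit.
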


Here, $u \to 0$ {\it in capacity} means that ${\rm Cap}(\{|u|>\epsilon \})\to 0$ for every $\epsilon > 0$.
The {\it capacity} of a Borel subset $E \subset B$ is defined as
\[{\rm Cap}(E):=\sup\left\{\int_E (dd^c u)^n : u \in PSH(B), -1\leq u \leq 0\right\}.\]

As a special case, we can describe behavior of the geodesics between toric subharmonic functions on unit disc.
In this case we only have to check Lelong numbers of $u_0$ and $u_1$ at the origin.

\begin{corollary}
Let $u_0$ and $u_1$ be toric psh functions on a unit disc in $\mathbb{C}$.
We assume that $u_0$ and $u_1$ have zero boundary value.
Then $u_t  \to u_0$ in capacity as $t \to 0$ if and only if $\nu(u_0,0)\geq \nu(u_1,0)$.
\end{corollary}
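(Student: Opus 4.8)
The plan is to deduce the Corollary directly from Theorem~\ref{geodlel} by specializing to dimension $n=1$ and simplifying the family of admissible curves. First I would check that the hypotheses of the Corollary imply those of the theorem: a nonconstant toric psh function on the unit disc has the form $u_j(z)=g_j(\log|z|)$ with $g_j$ convex and nondecreasing on $(-\infty,0]$, so its only possible pole is at the origin (giving $u_j^{-1}(-\infty)\subset\{0\}$), and the zero boundary value assumption is precisely the condition $\lim_{z\to\zeta}u_j(z)=0$ for $\zeta\in\partial\Omega$ that appears in the theorem. Thus Theorem~\ref{geodlel} applies, and it remains only to translate its curve condition into a statement about the single Lelong numbers $\nu(u_j,0)$.

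Second, I would analyze the curves. For $n=1$ an admissible curve is $\phi:\zeta\mapsto a\zeta^b$ with $a\in\mathbb{C}^*$ and $b\in\mathbb{Z}_{>0}$, so the pullback is again radial, $(u_j\circ\phi)(\zeta)=g_j(\log|a|+b\log|\zeta|)$. The main (and essentially the only) computation is to show that the Lelong number scales linearly in $b$, namely $\nu(u_j\circ\phi,0)=b\,\nu(u_j,0)$. This follows by writing $\nu(u_j\circ\phi,0)=\lim_{r\to-\infty}g_j(\log|a|+br)/r$ with $r=\log|\zeta|$ and substituting $s=\log|a|+br$: the additive constant $\log|a|$ is negligible against $s\to-\infty$, while the factor $b$ emerges from the change of variables, using that $\nu(u_j,0)=\lim_{s\to-\infty}g_j(s)/s$ is the asymptotic slope of the convex function $g_j$ (here the $\liminf$ in the definition of the Lelong number is in fact a genuine limit, by convexity).

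Finally, I would conclude. Since $b>0$, the inequality $\nu(u_0\circ\phi,0)\geq\nu(u_1\circ\phi,0)$ is equivalent to $b\,\nu(u_0,0)\geq b\,\nu(u_1,0)$, hence to $\nu(u_0,0)\geq\nu(u_1,0)$, independently of the choice of $a$ and $b$. Therefore the whole family of curve conditions in Theorem~\ref{geodlel} collapses to the single inequality $\nu(u_0,0)\geq\nu(u_1,0)$, which is exactly the assertion of the Corollary. I do not anticipate a genuine obstacle: the only point requiring care is the interchange of limits in the scaling identity for $\nu(u_j\circ\phi,0)$, which is routine given the convexity and monotonicity of $g_j$, and all of the substance of the result is already carried by Theorem~\ref{geodlel}.
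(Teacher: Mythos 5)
Your proposal is correct and follows exactly the route the paper intends: the corollary is stated as the $n=1$ specialization of Theorem~\ref{geodlel}, and your computation $\nu(u_j\circ\phi,0)=b\,\nu(u_j,0)$ for $\phi(\zeta)=a\zeta^b$ is precisely the reason the family of curve conditions collapses to the single inequality $\nu(u_0,0)\geq\nu(u_1,0)$. The only detail worth keeping is the one you already noted, namely that the pole condition $u_j^{-1}(-\infty)\subset\{0\}$ is automatic for a toric psh function on the disc, so the theorem indeed applies.
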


By using this corollary, we can construct an example of a geodesic $\{u_t\}$ such that $u_0=0$, $u_1$ has infinite energy, and $u_t \to u_0$ in capacity (see Example \ref{example}).

To prove the main theorem, we use techniques developed in \cite{Darvas}, in which the behavior of geodesics in K\"{a}hler case is studied.
We also use the properties of toric psh functions heavily.
Most of them are developed in \cite{Gue}.
The general psh case of Theorem \ref{geodlel} is a problem to be solved.

The organization of the paper is as follows.
In Section 2, we review the definitions and properties of geodesics between psh functions on a pseudoconvex domain.
In Section 3, the properties of toric psh functions, associated convex functions and their convex conjugates are treated.
We prove Theorem \ref{geodlel} and make an example in Section 4 .

\subsection*{\indent Acknowledgments}
The author would like to thank his supervisor Prof.\ Shigeharu Takayama for enormous support and valuable comments.
The idea of the formulation of Theorem \ref{geodlel} is obtained from the discussion with Dr.\ Takayuki Koike.
This work is supported by the Program for Leading Graduate Schools, MEXT, Japan.
This work is also supported by JSPS KAKENHI Grant Number 15J08115.

\section{Geodesics between two psh functions}
In this section, we review the known results, especially those in \cite{Darvas} and \cite{Rashgeod}.
The geodesics between psh functions are considered in \cite{Rashgeod}.
They are defined and well-behaved for some classes of psh functions with mild singularities, but their behavior is worse for functions with strong singularities.

First we define geodesics for psh functions in Cegrell's class $\mathcal{E}_0(\Omega)$ (\cite{Ceg}) and review the results.
The class $\mathcal{E}_0(\Omega)$ is consisted of bounded psh functions $u$ on $\Omega$ with $u=0$ on $\partial \Omega$ and $\int_{\Omega} (dd^c u)^n<+\infty$.

Let $S$ be the annulus $\{\zeta\in\mathbb{C}: 1<|\zeta|<e\}$. For two psh functions $u_0, u_1\in \mathcal{E}_0(\Omega)$, we define the function $\hat{u}$ on $\Omega \times S$ and $u_t:=\hat{u}(\cdot,e^t)$ for $0<t<1$ as in Definition \ref{defgeodesic}. 
In this situation, we have that $u_t \to u_j$ uniformly as $t\to j$, for $j=0,1$ (\cite[Proposition 3.1]{Rashgeod}).
In \cite{Rashgeod}, it is also proved that the energy functional $E(u):=\int_{\Omega} (-u)(dd^c u)^n$ is linear along the geodesics.

The geodesics are also considered for functions in finite energy class $\mathcal{F}_1(\Omega)$. This is the class of psh functions $u$ on $\Omega$ such that there exists a sequence $\{u_j\}_j$ of psh functions in $\mathcal{E}_0(\Omega)$ decreasing to $u$ on $\Omega$ with $\sup_j E(u_j)<+\infty$ and $\sup_j \int_{\Omega}(dd^c u_j)^n<+\infty$.
Similarly as above, we consider the geodesic $u_t$ for functions $u_0,u_1 \in \mathcal{F}_1(\Omega)$. We note that the functions in $\mathcal{F}_1$ may have singularities. 
If $u_j(z) = -\infty$ for some $z\in \Omega$ and $j=0,1$, we have that $u_t(z)=-\infty$ for every $0<t<1$. 
Therefore we cannot expect the uniform convergence $u_t \to u_j$ for functions in $\mathcal{F}_1(\Omega)$. 
The more appropriate notion in this situation is the convergence in capacity. 
We say that the sequence of psh functions $\{u_n\}_{n \in \mathbb{N}}$ converges to a function $u$ {\it in capacity} if we have, for every $\epsilon>0$, ${\rm Cap}(\{|u_n-u|>\epsilon\}) \to 0$ as $n \to \infty$.
We have that, for $u_0,u_1\in\mathcal{F}_1(\Omega)$, $u_t \to u_j$ in capacity when $t\to j$ for $j=0,1$ (\cite[Theorem 5.2]{Rashgeod}).

For psh functions with worse singularities, this type of property does not hold.
For example, if $u_0$ has a pole with positive Lelong number, so does every $u_t$ (\cite[Section 6]{Rashgeod}).

Therefore we have that the geodesics are ``good'' for finite energy class and ``bad'' for functions with poles of positive Lelong numbers.
Then how about the functions with infinite energy, but whose Lelong number is everywhere zero?
This is our main interest in this paper.

To prove the main theorem, we use the techniques in \cite{Darvas}, in which paper the K\"{a}hler case is treated.
Some techniques in \cite{Darvas} also work in the pseudoconvex case, as the following theorem.

\begin{theorem}[{\cite[Theorem 5.2]{Darvas}}]
Let $u_0, u_1 \in PSH(\Omega)$, $u_0, u_1 \not\equiv-\infty$. 
Assume that $u_0, u_1=0$ at $\partial \Omega$. 
Let $\{u_t\}_t$ be a geodesic between $u_0$ and $u_1$. 
Then we have that $\lim_{t \to 0} u_t = u_0$ in capacity if and only if $P_{[u_1]}(u_0) = u_0$.
Here, $P_{[u_1]}(u_0):= (\lim_{c \to +\infty}P(u_0,u_1+C))^*$ where $P(u,v)$ denotes the greatest psh function $w$ satisfying $w\leq \min(u,v)$.
\end{theorem}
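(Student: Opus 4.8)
The plan is to reduce the statement to an identification of the boundary value of the geodesic at $t=0$ with the envelope $P_{[u_1]}(u_0)$, and then to upgrade the resulting pointwise statement to convergence in capacity. Since the competitors and the constraints in Definition \ref{defgeodesic} are invariant under the rotations $\zeta\mapsto e^{i\theta}\zeta$, the envelope $\hat u$ is $S^1$-invariant, so $\hat u(z,\zeta)$ depends on $\zeta$ only through $t=\log|\zeta|$ and, for each fixed $z$, the slice $t\mapsto u_t(z)$ is convex on $(0,1)$. Convexity guarantees that the one-sided limit $u_{0^+}(z):=\lim_{t\to 0^+}u_t(z)$ exists; I would check that its upper semicontinuous regularization $u_{0^+}^*$ is psh and that the boundary constraint $\limsup_{t\to 0}u_t\le u_0$ gives $u_{0^+}^*\le u_0$. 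The whole theorem then follows once the identity $u_{0^+}^*=P_{[u_1]}(u_0)$ is established, together with the fact that convergence in capacity to a psh function determines the limit quasi-everywhere.

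For the inequality $u_{0^+}^*\ge P_{[u_1]}(u_0)$ I would exhibit explicit subgeodesics. For each $C>0$ the function $\hat v_C(z,\zeta):=P(u_0,u_1+C)(z)-C\log|\zeta|$ is psh on $\Omega\times S$ (psh in $z$, pluriharmonic in $\zeta$), satisfies $\hat v_C\le 0$ there, and has the right boundary limits: as $t\to 0$ it tends to $P(u_0,u_1+C)\le u_0$, and as $t\to 1$ it tends to $P(u_0,u_1+C)-C\le(u_1+C)-C=u_1$. Hence $\hat v_C$ is a competitor in the envelope, so $u_t\ge P(u_0,u_1+C)-Ct$. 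Letting $t\to 0^+$ yields $u_{0^+}\ge P(u_0,u_1+C)$, and then letting $C\to+\infty$ and taking the regularization gives $u_{0^+}^*\ge P_{[u_1]}(u_0)$, since $P(u_0,u_1+C)$ increases to $P_{[u_1]}(u_0)$.

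The reverse inequality $u_{0^+}^*\le P_{[u_1]}(u_0)$ is the heart of the matter, and it is here that the techniques of \cite{Darvas} must be transplanted to the pseudoconvex setting. Because $u_{0^+}^*\le u_0$ is already known and $P_{[u_1]}(u_0)$ is the largest psh minorant of $u_0$ whose singularity type is dominated by $[u_1]$, it suffices to prove $u_{0^+}^*\le u_1+C$ for some constant $C$; indeed this, combined with $u_{0^+}^*\le u_0$, gives $u_{0^+}^*\le P(u_0,u_1+C)\le P_{[u_1]}(u_0)$. Convexity of $t\mapsto u_t(z)$ already yields the qualitative statement that $u_{0^+}(z)=-\infty$ wherever $u_1(z)=-\infty$ (a convex function tending to $-\infty$ at $t=1$ must be identically $-\infty$), which reproves the pole-persistence of \cite[Section 6]{Rashgeod}; the quantitative bound $u_{0^+}^*\le u_1+C$ is equivalent, via the convexity inequality $u_{1^-}(z)\ge u_{0^+}(z)+h_z'(0^+)$, to a uniform lower bound $h_z'(0^+)\ge -C$ on the initial slope, i.e. to $u_t\ge u_{0^+}^*-Ct$. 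I would obtain this by comparing $\hat u$ with the subgeodesics above through the comparison principle for the Monge--Amp\`ere operator on $\Omega\times S$, which is available precisely because $\Omega$, and hence $\Omega\times S$, is pseudoconvex. This slope/singularity estimate is the one step where Darvas's compact K\"ahler argument does not transfer formally, and I expect it to be the main obstacle.

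Finally, granting $u_{0^+}^*=P_{[u_1]}(u_0)$, I would conclude the equivalence as follows. If $P_{[u_1]}(u_0)=u_0$, then the subgeodesic bound $u_t\ge P(u_0,u_1+C)-Ct$ together with $P(u_0,u_1+C)\nearrow u_0$ in capacity controls $u_t-u_0$ from below, while the convexity bound $u_t\le(1-t)u_{0^+}^*+t\,u_1=(1-t)u_0+t\,u_1$ gives $u_t-u_0\le t(u_1-u_0)$; since the sets $\{u_1-u_0>N\}$ decrease to a pluripolar set, their capacities tend to $0$, and one deduces $u_t\to u_0$ in capacity. Conversely, if $u_t\to u_0$ in capacity, then $u_t\to u_0$ quasi-everywhere along a subsequence, forcing $u_0=u_{0^+}^*=P_{[u_1]}(u_0)$. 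Thus the lower bound and this last upgrade are soft, and the decisive point remains the singularity-type estimate of the previous paragraph: showing that the geodesic limit is no less singular than $u_1$.
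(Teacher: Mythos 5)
A preliminary remark: the paper itself gives no proof of this theorem; it only asserts that the argument of \cite[Theorem 5.2]{Darvas} carries over to the pseudoconvex setting, so your proposal can only be measured against that intended transplanted argument. Your overall skeleton is the right one: the $S^1$-invariance and convexity of $t\mapsto u_t(z)$, the identification of $u_{0^+}^*$ with $P_{[u_1]}(u_0)$ as the real content of the theorem, the explicit subgeodesics $P(u_0,u_1+C)(z)-C\log|\zeta|$ giving the lower bound $u_t\ge P(u_0,u_1+C)-Ct$, and the soft upgrade to convergence in capacity at the end are all correct and standard.

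However, the step you yourself identify as the heart of the matter contains a genuine error, not merely an omission. You reduce the inequality $u_{0^+}^*\le P_{[u_1]}(u_0)$ to the claim that $u_{0^+}^*\le u_1+C$ for a single constant $C$, on the grounds that $P_{[u_1]}(u_0)$ is ``the largest psh minorant of $u_0$ whose singularity type is dominated by $[u_1]$.'' That characterization is not literally true: $\lim_{C\to+\infty}P(u_0,u_1+C)$ is an increasing limit, each term of which is $\le u_1+C$, but the limit need not be $\le u_1+C'$ for any fixed $C'$, and its singularity can be strictly milder than that of $u_1$. The paper's own Example \ref{example} refutes your intermediate claim: take $u_0=0$ and $u_1=-(-\log|z|)^{1/2}$ on the disc. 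A direct computation with the associated convex functions shows $\lim_{C}P(u_0,u_1+C)=0$, so $P_{[u_1]}(u_0)=u_0$ and the theorem asserts $u_t\to 0$ in capacity, whence $u_{0^+}^*=0$; but $0\le u_1+C$ fails near the origin for every $C$, since $u_1$ is unbounded below. Thus the inequality you propose to prove is false precisely in the regime the theorem is designed for (infinite-energy $u_1$ with vanishing Lelong numbers), and no comparison principle on $\Omega\times S$ can establish it. The upper bound must be proved against $P_{[u_1]}(u_0)$ itself, not against $u_1+C$; this is the step that actually requires Darvas's argument, and your proposal does not supply a viable substitute for it.
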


The proof is essentially the same as in \cite{Darvas}.
By this theorem, the condition in Theorem \ref{geodlel} can be written in terms of $P_{[u_1]}(u_0)$.

\section{Toric plurisubharmonic functions, associated convex functions, and their convex conjugates}
We say that a psh function $\phi$ on a neighborhood of 0 in $\mathbb{C}^n$ is {\it toric} if $\phi$ depends only on $|z_1|, \ldots, |z_n|$.
Using the following standard lemma, we can transform a toric psh function on a neighborhood of $0 \in \mathbb{C}^n$ into a convex function on a subset of $\mathbb{R}^n$ (\cite[Chapter I, Section 5]{DemCG}).
\begin{lemma}\label{psh-conv}
Let $u$ be a toric psh function on a neighborhood of $0\in \mathbb{C}^n$ invariant under the toric action.
We define a function $f$ in real $n$ variables as
\[f(\log|z_1|,\ldots,\log|z_n|) :=u(z_1,\ldots,z_n).\]
Then $f$ is increasing in each variable and convex.
Conversely, if $f$ is a convex function on $\mathbb{R}^n_{<0}$ increasing in each variable, then the function $u$ defined as 
\[u(z_1,\ldots,z_n):=f(\log|z_1|,\ldots,\log|z_n|)\]
is psh.
\end{lemma}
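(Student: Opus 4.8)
The plan is to prove the two implications separately, reducing each to classical one-variable facts about subharmonic functions via the exponential substitution. For the forward direction, set $\Phi(\zeta_1,\dots,\zeta_n):=(e^{\zeta_1},\dots,e^{\zeta_n})$, which is holomorphic from a tube domain $\{\zeta:\operatorname{Re}\zeta_j<c\}$ into the punctured neighborhood $(\mathbb{C}^*)^n$. Since $u$ is psh and $\Phi$ is holomorphic, $g:=u\circ\Phi$ is psh; and because $u$ is toric with $|e^{\zeta_j}|=e^{\operatorname{Re}\zeta_j}$, the function $g$ depends only on $x:=\operatorname{Re}\zeta=(\log|z_1|,\dots,\log|z_n|)$, so $g(\zeta)=f(\operatorname{Re}\zeta)$. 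To see that $f$ is convex, I would restrict $g$ to a complex line $w\mapsto\zeta_0+wv$ with $v\in\mathbb{R}^n$ real: the restriction is subharmonic and depends only on $\operatorname{Re}w$, and a one-variable subharmonic function of $\operatorname{Re}w$ alone is convex in $\operatorname{Re}w$, which is precisely convexity of $s\mapsto f(x_0+sv)$ along every real line. For monotonicity, I would fix all coordinates but the $j$-th and view $u$ as a radial subharmonic function of $z_j$; its circular means $\frac{1}{2\pi}\int_0^{2\pi}u(\dots,z_je^{i\theta},\dots)\,d\theta$ are nondecreasing in $\log|z_j|$, and since $u$ is radial it equals its own circular mean, so $f$ is nondecreasing in $x_j$.

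For the converse I would first assume $f$ smooth and work on $(\mathbb{C}^*)^n$. Writing $x_j=\log|z_j|=\tfrac{1}{2}\log(z_j\bar z_j)$, a direct computation gives
\[
\frac{\partial^2 u}{\partial z_k\partial\bar z_l}=\frac{1}{4 z_k\bar z_l}\,\frac{\partial^2 f}{\partial x_k\partial x_l}.
\]
Hence, for $\xi\in\mathbb{C}^n$, setting $\eta_k:=\xi_k/z_k$, the Levi form equals $\tfrac{1}{4}\sum_{k,l}f_{kl}\eta_k\bar\eta_l$, which is nonnegative because the real symmetric Hessian $(f_{kl})$ is positive semidefinite by convexity of $f$ (the imaginary cross terms cancel by the symmetry $f_{kl}=f_{lk}$). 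Thus $u$ is psh off the coordinate hyperplanes. For a general convex increasing $f$ I would mollify with a radial kernel to obtain smooth convex increasing $f_\varepsilon\downarrow f$, so that $u_\varepsilon\downarrow u$ and plurisubharmonicity passes to the decreasing limit.

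The step I expect to demand the most care — and the only one that genuinely uses the monotonicity hypothesis — is extending $u$ as a psh function across the coordinate hyperplanes $\{z_j=0\}$, where $x_j=\log|z_j|\to-\infty$. Here the assumption that $f$ is increasing in each variable ensures that $u$ stays locally bounded above as $z_j\to 0$, so by the removable-singularity theorem for psh functions (a function psh on $\Omega\setminus A$ with $A$ contained in a complex hypersurface and locally bounded above near $A$ extends psh across $A$) the upper semicontinuous regularization of $u$ is psh on a full neighborhood of the origin. By contrast the Hessian computation and the convexity argument are routine; the real content of the increasing condition lies precisely in this extension across the hyperplanes.
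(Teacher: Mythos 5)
Your proof is correct. The paper does not actually prove this lemma---it is quoted as a standard fact with a pointer to \cite[Chapter I, Section 5]{DemCG}---and your argument is precisely the standard one from that source: the exponential map onto a tube domain (plus radial circular means for monotonicity) in the forward direction, and the Levi-form computation with mollification in the converse, with the increasing hypothesis used, correctly, only to supply the local upper bound needed for the removable-singularity extension of $u$ across the coordinate hyperplanes.
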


By this lemma, for a toric psh function $u$ on the unit ball, we associate the convex function on $$C_0:=\{(\log |z_1|,\ldots,\log|z_n|): |z_1|^2 + \cdots + |z_n|^2 < 1\} \subset \mathbb{R}^n.$$ 
Since we assumed that $u^{-1}(-\infty) = \{0\}$, for any $x\in C_0$ and $1\leq j \leq n$, we have 
\[\lim_{t \to -\infty} f(x_1+t, \ldots, x_{j-1}+t,x_j,x_{j+1}+t,\ldots,x_n+t) \neq -\infty.\]

As in \cite{Gue}, it is useful to consider the {\it convex conjugate} (or {\it Legendre transform}) of a function $f$ defined as follows:
\begin{definition}
Let $f: \mathbb{R}^n \to \mathbb{R} \cap \{\pm \infty\}$ be a function.
We define its convex conjugate $f^*$ by
\[f^*(x^*) := \sup\{\langle x^*, x \rangle - f(x): x \in \mathbb{R}^n \}.\]
\end{definition}

We regard a function $f$ on $C_0$ as a function on $\mathbb{R}^n$ by defining $f(x) = +\infty$ if $x\not\in C_0$.
In the following proposition, we collect some basic properties which we will use later.
\begin{proposition}
Let $f$ and $g$ be $\mathbb{R} \cup \{\pm\infty \}$-valued functions on $\mathbb{R}^n$ and let $c \in \mathbb{R}$ be a constant, then:\\
(1) $f^*$ is a convex lower semicontinuous (lsc) function;\\
(2) $f^{**} \leq f$; moreover, $f^{**}$ is the greatest convex lsc function no greater than $f$;\\
(3) $f\leq g \Longrightarrow f^*\geq g^*$;\\
(4) $f^* = f^{***}$;\\
(5) $(\min(f,g))^* = \max(f^*,g^*)$;\\
(6) $(f+c)^* = f^* - c$;\\
(7) let $f_j$ be a pointwise decreasing sequence of functions converging to $f$, then $f^*_j \to f^*$ pointwise.
\end{proposition}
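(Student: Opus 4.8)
The plan is to reduce everything to direct manipulation of the defining supremum $f^*(x^*)=\sup_x\{\langle x^*,x\rangle - f(x)\}$, reserving a separation argument only for the maximality assertion in (2). For each fixed $x$, the map $x^*\mapsto \langle x^*,x\rangle - f(x)$ is affine, so $f^*$, being a pointwise supremum of affine functions, is convex and lsc; this gives (1). For (6), the constant $c$ factors out of the supremum. For (3), if $f\leq g$ then $\langle x^*,x\rangle - f(x)\geq \langle x^*,x\rangle - g(x)$ for every $x$, and taking suprema preserves the inequality. For (5), I would use $-\min(f,g)=\max(-f,-g)$ to rewrite the integrand as a pointwise maximum and then commute the supremum over $x$ with the finite maximum, which yields $\max(f^*,g^*)$.

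For (7), first note that $f_j\geq f_{j+1}\geq f$ together with (3) makes $\{f_j^*\}$ an increasing sequence with $f_j^*\leq f^*$; set $h:=\lim_j f_j^*$, so that $h\leq f^*$. For the reverse inequality I would fix $x^*$ and $x$, use $f_j^*(x^*)\geq \langle x^*,x\rangle - f_j(x)$, and let $j\to\infty$ so that $f_j(x)\downarrow f(x)$ gives $h(x^*)\geq \langle x^*,x\rangle - f(x)$; taking the supremum over $x$ then yields $h\geq f^*$, hence equality.

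The two remaining items require slightly more. The first inequality in (2), $f^{**}\leq f$, is the Fenchel (Young) inequality: by definition $f^*(x^*)\geq \langle x^*,x\rangle - f(x)$, i.e.\ $\langle x^*,x\rangle - f^*(x^*)\leq f(x)$ for all $x^*$, and taking the supremum over $x^*$ gives $f^{**}(x)\leq f(x)$. Granting this, (4) follows formally: applying (3) to $f^{**}\leq f$ gives $f^{***}\geq f^*$, while applying the first inequality of (2) to the function $f^*$ gives $(f^*)^{**}=f^{***}\leq f^*$, so $f^{***}=f^*$. In particular no biconjugation theorem is needed for (4).

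The main obstacle is the maximality statement in (2): that $f^{**}$ is the \emph{greatest} convex lsc minorant of $f$. Here I would invoke the Fenchel--Moreau theorem, namely that $g=g^{**}$ for every convex lsc function $g$, with the usual properness conventions on $\mathbb{R}\cup\{\pm\infty\}$-valued functions. Given any convex lsc $g\leq f$, applying (3) twice yields $g^{**}\leq f^{**}$, and since $g=g^{**}$ this gives $g\leq f^{**}$; combined with (1), which shows that $f^{**}$ is itself convex and lsc, this identifies $f^{**}$ as the largest such minorant. The content of Fenchel--Moreau is the inequality $g\leq g^{**}$ for convex lsc $g$, proved by strictly separating a point below the epigraph of $g$ from the closed convex epigraph by an affine hyperplane via Hahn--Banach; this is the only nonformal ingredient in the whole proposition.
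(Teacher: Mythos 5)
Your proposal is correct and matches the paper's treatment: the paper cites Rockafellar for (1)--(2) (where, as you note, Fenchel--Moreau is the only nontrivial ingredient), declares (3)--(6) elementary, and proves only (7) -- by exactly the argument you give, namely monotonicity from (3) for one inequality and passing to the limit in $\langle x^*,x\rangle - f_j(x)\leq f_j^*(x^*)$ for the other. Your write-up simply supplies the routine verifications the paper leaves to the reader.
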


\begin{proof}
(1) and (2) are standard facts (see, for example, \cite[Chapter 12]{Rockafellar}).
The remaining statements can be proved easily, and we only give the proof of (7).

Let $f_j$ be a pointwise decreasing sequence.
Then we have $\lim_{j\to \infty}f_j^* \leq f^*$ by (3).
To prove equality, let $x^* \in \mathbb{R}^n$.
Then, by definition of $f^*$, we can take $x_0$ such that $\langle x^*,x_0 \rangle - f(x_0) $ is arbitrarily close to $f^*(x^*)$.
We have that $\langle x^*,x_0 \rangle - f_j(x_0) \to \langle x^*,x_0 \rangle - f(x_0)$ as $j \to \infty$.
Choosing sufficiently large $j$, we can take $x_0$ and $j$ such that $\langle x^*,x_0 \rangle - f_j(x_0)$ is arbitrarily close to $f^*(x^*)$.
Since $\langle x^*,x_0 \rangle - f_j(x_0) \leq f_j^*(x^*)$, we have that $\lim_{j\to\infty} f_j^*(x^*) = f^*(x^*)$.
\end{proof}

Here we introduce the following notation for the convex envelope:

\begin{definition}
Let $C$ be a convex subset of $\mathbb{R}^n$ and let $f$ and $g$ be upper semi-continuous functions on $C$.
We define a convex envelope of $f$ as
\[P_C(f):= \sup\{u: u \text{ is a convex function on } C \text{ such that } u\leq f \}.\]
We also define as $P_C(f,g):=P_C(\min(f,g))$.
If the set $C$ is clear from the context, we denote by $P(f)$.
\end{definition}

By the preceding proposition, these functions can be written as $P(f) = f^{**}$ and $P(f,g) = (\min(f,g))^{**} = (\max(f^*,g^*))^*$.

Following \cite{Gue}, for a convex function, we introduce the function representing the ``slope'' at infinity and the subset of $\mathbb{R}^n$ called Newton convex body (we note that we use different conventions of signature).

\begin{definition}\label{defhat}
Let $f$ be a convex function on $C_0$ increasing in each variable. Fix $a\in C_0$. \\
(1) We define a function $\hat{f}$ on $\mathbb{R}_{\leq 0}^n \setminus\{0\}$ by
\[\hat{f}(w) := \lim_{t \to +\infty} \frac{f(a +tw)}{t}.\]
(2) We define the Newton convex body $P(f)$ associated to $f$ by
\[\Gamma(f):= \{\lambda \in \mathbb{R}^n: \langle \lambda ,\cdot\rangle \leq f + O(1) \}.\]
\end{definition}
Similarly we define $\Gamma(\hat{f})$.
By the definition, $\Gamma(f)$ equals to the domain of the convex conjugate $f^*$, i.e.\ the set $\{x^* : f^*(x^*) \in \mathbb{R}\}$.
We have the following relation between $\hat{f}$ and $\Gamma(f)$.

\begin{lemma}[{\cite[Lemma 1.19]{Gue}}]\label{newtonbody}
Let $f$ and $a$ be as in Definition \ref{defhat}. Then,
\[ \overline{\Gamma(f)} =\Gamma(\hat{f}) = \{\lambda \in \mathbb{R}^n: \langle \lambda, \cdot\rangle \leq \hat{f} \}.\]
\end{lemma}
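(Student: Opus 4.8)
The plan is to read the statement through convex analysis, identifying $\hat{f}$ with the recession (asymptotic) function of $f$ and $\Gamma(f)$ with the effective domain $\mathrm{dom}\,f^*$ of the conjugate (as already noted after Definition \ref{defhat}), and then to invoke the duality between recession functions and support functions. Replacing $f$ by its lower semicontinuous regularization changes neither $\Gamma(f)$ nor $\hat{f}$, so I may assume $f$ is a closed proper convex function. Since $f(a)$ is finite, the difference quotient $t\mapsto (f(a+tw)-f(a))/t$ is nondecreasing, so $\hat{f}(w)$ exists, is convex, positively homogeneous of degree one, and independent of $a$. Positive homogeneity lets me erase the $O(1)$ in Definition \ref{defhat}(2): if $\langle\lambda,w\rangle\le\hat{f}(w)+C$ for all $w$, then applying this to $sw$ and dividing by $s\to+\infty$ gives $\langle\lambda,w\rangle\le\hat{f}(w)$. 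This proves the right-hand equality $\Gamma(\hat{f})=\{\lambda:\langle\lambda,\cdot\rangle\le\hat{f}\}$, a closed convex set.

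For the inclusion $\overline{\Gamma(f)}\subseteq\Gamma(\hat{f})$, take $\lambda\in\Gamma(f)$, so $\langle\lambda,x\rangle\le f(x)+C$ on $C_0$. For $w\in\mathbb{R}^n_{\le0}\setminus\{0\}$ the point $a+tw$ lies in $C_0$ for large $t$; substituting it, dividing by $t$, and letting $t\to+\infty$ gives $\langle\lambda,w\rangle\le\hat{f}(w)$. Hence $\Gamma(f)\subseteq\Gamma(\hat{f})$, and as the latter is closed the inclusion persists after taking the closure.

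The reverse inclusion is the substantial part. Writing $\sigma_K(w)=\sup_{\mu\in K}\langle\mu,w\rangle$ for the support function, the core fact is the recession--support duality $\hat{f}=\sigma_{\Gamma(f)}$ on $\mathbb{R}^n_{\le0}$ (\cite[Theorem 13.3]{Rockafellar}). Two structural properties of $K:=\overline{\Gamma(f)}$ then finish the argument, both coming from $C_0\subseteq\mathbb{R}^n_{<0}$ together with the monotonicity and single-pole hypotheses on $f$: first, $K\subseteq\mathbb{R}^n_{\ge0}$ (if $\lambda_j<0$ one sends the $j$-th variable to $-\infty$, where $f$ stays bounded below, to make $f^*(\lambda)=+\infty$); second, $K$ is upper-complete, i.e.\ $\lambda\in K$ and $\mu\ge\lambda$ imply $\mu\in K$, because $\langle\mu-\lambda,x\rangle\le0$ for $x<0$. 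Upper-completeness forces $\sigma_K(w)=+\infty$ whenever $w$ has a positive coordinate, so in the support-function characterization $K=\{\lambda:\langle\lambda,w\rangle\le\sigma_K(w)\ \forall w\}$ only the directions $w\le0$ impose constraints; there $\sigma_K=\hat{f}$, and thus $K=\{\lambda:\langle\lambda,w\rangle\le\hat{f}(w)\ \forall w\in\mathbb{R}^n_{\le0}\setminus\{0\}\}=\Gamma(\hat{f})$. The main obstacle is precisely establishing $\hat{f}=\sigma_{\Gamma(f)}$ on the negative orthant and the two monotonicity facts for $\Gamma(f)$; once these are in hand, the passage from all directions to $w\le0$ and the separation are routine.
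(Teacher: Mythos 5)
Your argument is correct. Note, however, that the paper does not prove this lemma at all: it is imported verbatim as \cite[Lemma 1.19]{Gue}, so there is no internal proof to compare against, and what you have written is a self-contained substitute for that citation. Your route is the standard convex-analytic one and almost certainly mirrors the source: you identify $\Gamma(f)$ with $\mathrm{dom}\,f^*$ and $\hat{f}$ with the recession function $f0^+$ restricted to $\mathbb{R}^n_{\le 0}\setminus\{0\}$ (which is where the rays $a+tw$ remain in $C_0$), kill the $O(1)$ by positive homogeneity, get the easy inclusion $\overline{\Gamma(f)}\subseteq\Gamma(\hat f)$ by dividing the linear minorant inequality by $t$, and obtain the converse from Rockafellar's duality $\sigma_{\mathrm{dom}\,f^*}=f0^+$ together with the observation that the monotonicity of $f$ on $C_0\subseteq\mathbb{R}^n_{<0}$ makes $\overline{\Gamma(f)}$ upper-complete, so that only directions $w\le 0$ give nonvacuous constraints in the support-function description of $\overline{\Gamma(f)}$. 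The only points you pass over silently are that $\Gamma(f)$ is nonempty (needed for upper-completeness to force $\sigma_K=+\infty$ off the negative orthant; this holds because a proper convex function increasing in each variable on the open set $C_0$ admits a subgradient, hence $\mathrm{dom}\,f^*\neq\emptyset$) and that $\sigma_{\overline{\Gamma(f)}}=\sigma_{\Gamma(f)}$; both are routine. The first structural property you list ($K\subseteq\mathbb{R}^n_{\ge0}$) is true but not actually used in your closing argument.
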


In the next section, we use these facts to prove the main theorem.

\section{Proof of the main theorem}
First we rephrase the statement of the main theorem using convex functions on $C_0:=\{(\log |z_1|,\ldots,\log|z_n|): |z_1|^2 + \cdots + |z_n|^2 < 1\} \subset \mathbb{R}^n$.
We need to prove the following proposition.

\begin{proposition}\label{rephrase}
Let $f,g$ be convex functions increasing in each variable, such that $f=g=0$ on $\partial C_0$ and $f,g$ satisfies the pole condition after Lemma \ref{psh-conv}.
Then, the following two conditions are equivalent:\\
(1) $\lim_{c\to +\infty}P_{C_0}(f,g+c) = f$ a.e.\ on $C_0$;\\
(2) for each $a=(a_1,a_2,\ldots,a_n) \in C_0$ and $b=(b_1,\ldots,b_n) \in \mathbb{N}^n$, we have that 
\[\lim_{t \to -\infty} \frac{f(a+tb)}{t} \geq \lim_{t \to -\infty} \frac{g(a+tb)}{t}.\]
\end{proposition}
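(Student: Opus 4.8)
The plan is to push everything through the Legendre transform, where both conditions (1) and (2) turn into statements about the Newton bodies $\Gamma(f)$ and $\Gamma(g)$, and to show that each is equivalent to the single inclusion $\overline{\Gamma(f)}\subseteq\overline{\Gamma(g)}$. First I would rewrite the envelope on the conjugate side. By the formula $P_{C_0}(f,g+c)=(\min(f,g+c))^{**}=(\max(f^*,(g+c)^*))^*$ recorded after the proposition on convex conjugates, together with properties (5) and (6), we get $P_{C_0}(f,g+c)=(\max(f^*,g^*-c))^*$. Since $g+c$ increases with $c$, the family $P_{C_0}(f,g+c)$ is increasing and bounded above by $f$, so its limit is a convex lsc function $\leq f$; as two convex functions agreeing a.e.\ on the open convex set $C_0$ agree throughout $C_0$, condition (1) is equivalent to $\lim_{c\to+\infty}P_{C_0}(f,g+c)=f$ pointwise. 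On the conjugate side $\max(f^*,g^*-c)$ decreases, as $c\to+\infty$, to the function $h$ that equals $f^*$ on $\Gamma(g)$ (the effective domain of $g^*$) and equals $+\infty$ off $\Gamma(g)$. Applying property (7) along $c=j\in\mathbb{N}$ lets me interchange the limit with the conjugate, giving $\lim_{c\to+\infty}P_{C_0}(f,g+c)=h^*$.

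Next I would identify condition (1) with the inclusion of Newton bodies. By biconjugation, $h^*=f=f^{**}$ is equivalent to $h^{**}=f^*$, and since $h\geq f^*$ we always have $h^{**}\geq f^*$. If $\overline{\Gamma(f)}\subseteq\overline{\Gamma(g)}$, then every point where $f^*$ is finite already lies in $\Gamma(g)$, so $h=f^*$ and hence $h^*=f$. Conversely, if some $p\in\Gamma(f)$ lies outside $\overline{\Gamma(g)}$, then $p$ is a subgradient of $f$ at all points $x$ of an open subset of $C_0$; restricting the defining supremum $f(x)=\sup_{x^*}(\langle x^*,x\rangle-f^*(x^*))$ to $x^*\in\Gamma(g)$ then deletes the maximizer and strictly lowers the value, so $h^*<f$ on that open set and (1) fails. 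This yields the equivalence of (1) with $\overline{\Gamma(f)}\subseteq\overline{\Gamma(g)}$.

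Finally I would rephrase condition (2) through $\hat f$ and Lemma \ref{newtonbody}. Substituting $s=-t$ and $w=-b$ shows $\lim_{t\to-\infty}f(a+tb)/t=-\hat f(-b)$, and likewise for $g$, so (2) reads $\hat f(-b)\leq\hat g(-b)$ for every $b\in\mathbb{N}^n$. Since $\hat f$ and $\hat g$ are positively homogeneous and convex, hence continuous on the open negative orthant, and the directions $-b$ are dense there, this is equivalent to $\hat f\leq\hat g$ on all of $\mathbb{R}^n_{\leq 0}\setminus\{0\}$. By Lemma \ref{newtonbody} we have $\overline{\Gamma(f)}=\{\lambda:\langle\lambda,\cdot\rangle\leq\hat f\}$ and $\overline{\Gamma(g)}=\{\lambda:\langle\lambda,\cdot\rangle\leq\hat g\}$, so $\hat f\leq\hat g$ on the negative orthant is precisely $\overline{\Gamma(f)}\subseteq\overline{\Gamma(g)}$. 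Chaining the three equivalences proves the proposition.

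I expect the delicate point to be the converse in the middle step: showing that when $\overline{\Gamma(f)}\not\subseteq\overline{\Gamma(g)}$ the equality $h^*=f$ fails on a set of positive measure, not merely at isolated points. This needs control of the subgradient map of $f$ — that interior points of $\Gamma(f)$ are realized as gradients $\nabla f(x)$ for $x$ ranging over an open part of $C_0$ — together with the lower-semicontinuity and closure bookkeeping for $h$ (comparing the effective domain $\Gamma(f)\cap\Gamma(g)$ with its closure) when passing to $h^{**}$. By contrast, the density and continuity argument of the last step, and the verification that the pole condition keeps $\hat f,\hat g$ finite on the negative orthant so the slopes in (2) are genuine limits, are routine.
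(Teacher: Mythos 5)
Your direction $(2)\Rightarrow(1)$ is essentially the paper's argument: condition (2) gives $\hat f\le\hat g$ on the negative orthant, hence $\overline{\Gamma(f)}\subseteq\overline{\Gamma(g)}$ via Lemma \ref{newtonbody}, and then $\max(f^*,g^*-c)$ decreases to $f^*$ away from $\partial\Gamma(f)$, so taking conjugates (property (7) plus the lower semicontinuous regularization) recovers $f$. One small correction there: you should only claim $h=f^*$ on ${\rm int}(\Gamma(f))$, not on all of $\Gamma(f)$, since a point of $\Gamma(f)$ may sit on $\partial\Gamma(g)$ where $g^*=+\infty$; the paper handles this by noting $Q=f^*$ off $\partial\Gamma(f)$ and passing to $Q^{**}$.

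The genuine gap is in your converse of the middle step, i.e.\ the implication $(1)\Rightarrow\overline{\Gamma(f)}\subseteq\overline{\Gamma(g)}$. The assertion that a point $p\in\Gamma(f)\setminus\overline{\Gamma(g)}$ ``is a subgradient of $f$ at all points $x$ of an open subset of $C_0$'' can only hold if $f$ is affine with slope $p$ on that open set, and the weaker claim you fall back on --- that interior points of $\Gamma(f)$ are realized as gradients $\nabla f(x)$ for $x$ in an open part of $C_0$ --- is also false. Already in one variable with $f(x)=x$ on $C_0=\mathbb{R}_{<0}$ one computes $\Gamma(f)=[1,+\infty)$ while $f'\equiv 1$: every slope $p>1$ is attained only at the boundary point $0$, never at an interior point, so your ``delete the maximizer'' mechanism produces no open set on which $h^*<f$. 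The implication itself is true (in this example, if $\Gamma(g)=[\nu_g,+\infty)$ with $\nu_g>1$ then $h^*(x)=\nu_g x<x=f(x)$ for all $x<0$), but the failure of (1) comes from the behaviour of $f$ along rays running into the pole, not from interior gradients, so the subgradient bookkeeping you flag as the delicate point cannot be completed as proposed. The paper sidesteps this entirely: it proves $(1)\Rightarrow(2)$ by restricting to the half-line $l=\{a+tb:t<0\}$, using $P_{C_0}(f,g+c)\le P_l(f,g+c)$ on $l$, and then running the elementary one-variable slope comparison of Proposition \ref{1dim}. You would need either to adopt that route or to replace your argument by one that works directly with the asymptotic slopes along rays rather than with pointwise gradients.
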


Here, ``a.e.'' in the condition (1) is not necessary. Indeed, the left-hand side is an increasing sequence of convex functions, thus the limit is also convex.
If two convex functions on $C_0$ coincide outside a subset of Lebesgue measure zero, they are equal on all of $C_0$.

\subsection{1-dimensional case}
First we prove the 1-dimensional case.
In this case, we have an elementary proof.

\begin{proposition}\label{1dim}
Let $f$ and $g$ be negative convex increasing functions on $\mathbb{R}_{<0}$.
Then we have that $\lim_{c \to \infty} P(f,g+c)=f$ on $\mathbb{R}$ if and only if $\lim_{t \to -\infty} f(t)/t \geq \lim_{t \to -\infty} g(t)/t$.
\end{proposition}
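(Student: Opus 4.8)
The plan is to pass to convex conjugates, where the envelope becomes a single pointwise limit. Using the identity $P(f,g+c)=(\min(f,g+c))^{**}$ together with properties (5) and (6) of the conjugate, one has
\[
P(f,g+c)=\bigl(\max(f^{*},\,g^{*}-c)\bigr)^{*}.
\]
Since $\max(f^{*},g^{*}-c)$ decreases as $c\to+\infty$, property (7) lets me compute the limit of $P(f,g+c)$ as the conjugate of $\tilde f:=\lim_{c\to+\infty}\max(f^{*},g^{*}-c)$. This limit is transparent: at points where $g^{*}$ is finite the term $g^{*}-c$ runs to $-\infty$ and $\tilde f=f^{*}$, while where $g^{*}=+\infty$ one has $\tilde f=+\infty$. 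Hence $\tilde f$ coincides with $f^{*}$ on $\mathrm{dom}\,g^{*}$ and equals $+\infty$ off it, and $\lim_{c\to+\infty}P(f,g+c)=(\tilde f)^{*}$.

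Everything now depends on how $\mathrm{dom}\,f^{*}$ sits inside $\mathrm{dom}\,g^{*}$. Set $\alpha:=\lim_{t\to-\infty}f(t)/t$ and $\beta:=\lim_{t\to-\infty}g(t)/t$; these exist in $[0,+\infty)$ because $f,g$ are convex and increasing, and they are the asymptotic slopes at $-\infty$. I would identify the left endpoint of $\mathrm{dom}\,f^{*}=\Gamma(f)$ with $\alpha$: in one variable $\hat{f}(-1)=-\alpha$, so Lemma \ref{newtonbody} gives $\overline{\Gamma(f)}=[\alpha,+\infty)$, and similarly $\overline{\mathrm{dom}\,g^{*}}=[\beta,+\infty)$. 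The slope inequality in the statement is thus exactly $\alpha\geq\beta$, i.e.\ $\mathrm{dom}\,f^{*}\subseteq\mathrm{dom}\,g^{*}$ up to the boundary point.

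It remains to run the two cases, and this is where the one genuinely delicate point lives. If $\alpha\geq\beta$, then $\tilde f$ and $f^{*}$ differ at most at the single endpoint $\lambda=\alpha$; since $f^{*}$ is closed (property (1)) and the conjugate is unchanged under passing to the lower semicontinuous hull, $(\tilde f)^{*}=(f^{*})^{*}=f^{**}=f$, giving the desired identity. If $\alpha<\beta$, then $\tilde f=+\infty$ on the entire interval $[\alpha,\beta)$, on which $f^{*}$ is finite, so $(\tilde f)^{*}$ is the conjugate of $f^{*}$ restricted to $[\beta,+\infty)$; its asymptotic slope at $-\infty$ equals $\beta$, whereas that of $f=(f^{*})^{*}$ is $\alpha<\beta$, so $(\tilde f)^{*}(t)<f(t)$ for $t$ sufficiently negative and the identity fails. (Convexity of the limit makes equality on the interior automatic on all of $\mathbb{R}$, so the endpoints cause no trouble.) The heart of the argument is precisely this boundary dichotomy: I must verify that a modification of $f^{*}$ confined to a single endpoint of its domain is invisible to the conjugation, while a modification spread over an interval of positive length survives and shifts the asymptotic slope. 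Everything else is formal manipulation of Legendre transforms.
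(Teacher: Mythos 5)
Your proof is correct, but it takes a genuinely different route from the paper's own argument for this proposition. The paper proves the one-dimensional case by elementary means, staying entirely on the "primal" side: for the forward direction it normalizes $f(0)=0$, uses that $P(f,g+c)(0)=0$ for large $c$, and compares difference quotients of the convex function $P(f,g+c)$ at $-M_0$ and $-M_1$ to trap $\lim g(-M)/(-M)$ below $f(-M_0)/(-M_0)$; for the converse it observes that the slope hypothesis gives $f(t)+\epsilon t\leq\min(f(t),g(t)+C)$ for large $C$, so the explicit competitor $f+\epsilon t$ sits below the envelope, and one lets $\epsilon\to 0$. You instead pass to Legendre conjugates, identify $\lim_c P(f,g+c)$ as $(\tilde f)^*$ with $\tilde f=f^*$ on $\mathrm{dom}\,g^*$ and $+\infty$ elsewhere, translate the slope condition into the domain inclusion $\overline{\mathrm{dom}\,f^*}=[\alpha,\infty)\subseteq[\beta,\infty)=\overline{\mathrm{dom}\,g^*}$, and resolve both directions by the endpoint dichotomy. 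This is in effect the paper's \emph{higher-dimensional} argument (the proof of $(2)\Rightarrow(1)$ of Proposition \ref{rephrase}) specialized to $n=1$, and it has the merit of handling both implications in one dual framework, whereas the paper uses the elementary one-dimensional proof as an input to the higher-dimensional forward direction. Your delicate point --- that altering $f^*$ to $+\infty$ at the single endpoint $\lambda=\alpha$ of its domain does not change the conjugate --- is genuine and your resolution is right; it would be worth one more line to record the fact you are using, namely that a closed convex function of one real variable is continuous relative to the closure of its domain, so $f^*(\alpha)=\lim_{\lambda\downarrow\alpha}f^*(\lambda)$ and the supremum defining $(\tilde f)^*(t)$ is unaffected by excising the point $\lambda=\alpha$. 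Note also that this single-point subtlety is precisely the one the paper silently absorbs in the higher-dimensional proof by arguing that $Q$ and $f^*$ agree off $\partial\Gamma(f)$ and that $Q^{**}$ is the lower semicontinuous regularization of $Q$.
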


\begin{proof}
First we assume that $\lim_{c \to \infty} P(f,g+c)=f$.
Adding a constant to $f$ and $g$, we can also assume that $f(0) = 0$.
We note that, under this assumption, we have that $P(f,g+c)(0)=0$ for sufficiently large $c$ (since $g$ is bounded below by a linear function).
Take an arbitrary $M_0>0$.
By assumption, we have that $P(f,g+c)(M_0) \to f(M_0)$ when $c \to \infty$. 
This is an increasing limit, thus we can choose $c_0>0$ satisfying $P(f,g+c_0)(M_0) \geq f(M_0) -\epsilon$.
Then, for $M_1 > M_0$, we have that
\[P(f,g+c_0)(-M_1) \geq \frac{M_1}{M_0} P(f,g+c_0) (-M_0)\]
by $P(f,g+c)(0)=0$ and the convexity of $P$.
From these inequalities and $P(f,g+c_0) \leq g+c_0$, we have that
\[\frac{g(-M_1)}{-M_1} + \frac{c_0}{-M_1} \leq \frac{P(f,g+c_0)(-M_1)}{-M_1} \leq \frac{P(f,g+c_0)(-M_0)}{-M_0} \leq \frac{f(-M_0)}{-M_0} - \frac{\epsilon}{-M_0}.\]
Thus, taking $M_1 \to \infty$, we have that
\[\lim_{M\to\infty} \frac{g(-M)}{-M} \leq \frac{f(-M_0)}{-M_0} - \frac{\epsilon}{-M_0} \leq \frac{f(-M_0)}{-M_0}\]
for each $M_0$.
We have the desired inequality when we take $M_0 \to \infty$. 

Conversely, we assume $\lim_{t \to -\infty} f(t)/t \geq \lim_{t \to -\infty} g(t)/t$.
The inequality $\lim P(f,g+c) \leq f$ is trivial. We want to show the opposite inequality.
By assumption, we have that $f(-M)/(-M) + \epsilon > g(-M)/M$ for sufficiently large $M$, namely when $M>M_0$.
Equivalently we have $f(-M) - \epsilon M < g(-M)$ for $M>M_0$.
Then, by convexity of $f$ and $g$, it follows that $f(t)+\epsilon t < g(t) + C$ for every $t<0$ and sufficiently large $C>0$.
We also have that $f(t)+\epsilon t < f(t)$ trivially, thus
\[f(t)+ \epsilon t \leq P(f,g+C) \leq \lim_{c \to \infty}P(f,g+c).\]
Since this inequality holds for every $\epsilon>0$, we have the conclusion.
\end{proof}

\subsection{Higher dimensional case}
First we prove $(1) \Longrightarrow (2)$ in Proposition \ref{rephrase}.

\begin{proof}[Proof of $(1) \Longrightarrow (2)$]
Take $a$ and $b$ as in (2).
We denote by $l$ the half line $\{a+tb: t<0\}$.
Then we have that
\[P_{C_0}(f,g+c) \leq P_l(f,g+c)\]
on $l$, because the left-hand side is a convex function on $l$ no greater than $f$ and $g+c$.
By the fact that equality (1) holds everywhere in $C_0$, we have that $\lim_{c\to +\infty} P_l(f,g+c) = f$ on $l$.
Thus we can apply the 1-dimensional case for $f|_l$ and $g|_l$ and get the conclusion.
\end{proof}

Next we prove the opposite implication.
We shall prove that the condition (2) implies the relation of Newton convex bodies.

\begin{lemma}
Under the condition (2), we have that
\[\overline{\Gamma(f)} \subset \overline{\Gamma(g)}.\]
\end{lemma}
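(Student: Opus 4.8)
The plan is to reduce the stated inclusion to a pointwise comparison $\hat{f}\le \hat{g}$ on the cone $\mathbb{R}^n_{\le 0}\setminus\{0\}$ and then invoke Lemma \ref{newtonbody}. First I would rewrite hypothesis (2) in terms of the slope functions $\hat{f},\hat{g}$ of Definition \ref{defhat}. Substituting $t=-s$ and letting $s\to +\infty$, for $b\in\mathbb{Z}_{>0}^n$ one has $\lim_{t\to-\infty} f(a+tb)/t = -\hat{f}(-b)$, and likewise for $g$; hence condition (2) is exactly the family of inequalities $\hat{f}(-b)\le \hat{g}(-b)$ for all $b\in\mathbb{Z}_{>0}^n$. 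Note that these are inequalities only at the strictly negative integer directions $-b$.

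Next I would propagate this to all directions. Both $\hat{f}$ and $\hat{g}$ are positively homogeneous of degree one (immediate from the definition) and convex: if $w=\theta w_1+(1-\theta)w_2$ then $a+tw=\theta(a+tw_1)+(1-\theta)(a+tw_2)$, so dividing the convexity inequality for $f$ by $t$ and letting $t\to+\infty$ gives convexity of $\hat{f}$, and similarly for $\hat{g}$. By positive homogeneity the inequalities $\hat{f}(-b)\le \hat{g}(-b)$ upgrade to $\hat{f}(q)\le \hat{g}(q)$ for every $q\in\mathbb{Q}^n_{<0}$, writing such a $q$ as $\frac{1}{N}(-b)$ with $b\in\mathbb{Z}_{>0}^n$ and $N\in\mathbb{Z}_{>0}$. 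Since a convex function is continuous on the interior of its domain and $\mathbb{Q}^n_{<0}$ is dense in $\mathbb{R}^n_{<0}$, I obtain $\hat{f}\le \hat{g}$ on all of $\mathbb{R}^n_{<0}$.

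Finally I would conclude by Lemma \ref{newtonbody}, which gives $\overline{\Gamma(f)}=\Gamma(\hat{f})=\{\lambda:\langle\lambda,\cdot\rangle\le \hat{f}\}$ and the analogous identity for $g$. Take $\lambda\in\overline{\Gamma(f)}$. For an interior direction $w\in\mathbb{R}^n_{<0}$ we get at once $\langle\lambda,w\rangle\le \hat{f}(w)\le \hat{g}(w)$. For a boundary direction $w_0\in\mathbb{R}^n_{\le 0}\setminus\{0\}$ with some vanishing coordinates, I would fix a strictly negative $w_1\in\mathbb{R}^n_{<0}$ and set $w_s:=(1-s)w_0+sw_1$, which lies in $\mathbb{R}^n_{<0}$ for $s\in(0,1]$; then $\langle\lambda,w_s\rangle\le \hat{g}(w_s)$, and as $s\to 0^+$ the left side tends to $\langle\lambda,w_0\rangle$ while convexity of $\hat{g}$ yields $\limsup_{s\to 0^+}\hat{g}(w_s)\le \hat{g}(w_0)$. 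Hence $\langle\lambda,w_0\rangle\le \hat{g}(w_0)$, so $\lambda\in\overline{\Gamma(g)}$, proving $\overline{\Gamma(f)}\subset\overline{\Gamma(g)}$.

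The main obstacle is precisely the passage to the boundary of the cone $\mathbb{R}^n_{\le 0}$: the hypothesis supplies inequalities only in the strictly negative (integer, then by homogeneity rational, then by density real) directions, and $\hat{f},\hat{g}$ need not be continuous where a coordinate vanishes. The device above sidesteps any lower semicontinuity difficulty by testing the closed condition defining $\overline{\Gamma(g)}$ along segments running into the boundary and using only the convexity estimate $\hat{g}(w_s)\le (1-s)\hat{g}(w_0)+s\hat{g}(w_1)$ to control the limit from the correct side.
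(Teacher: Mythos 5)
Your proof is correct and follows essentially the same route as the paper's: translate condition (2) into the slope inequality $\hat f \le \hat g$ at negative integer directions, extend it to all of $\mathbb{R}^n_{<0}$ by homogeneity and convexity, and conclude via Lemma \ref{newtonbody}. You are in fact somewhat more careful than the paper: your sign $\hat f \le \hat g$ is the one actually forced by Definition \ref{defhat} (the paper's proof writes the reversed inequality, a slip), and your segment argument supplies the verification of $\langle \lambda,\cdot\rangle \le \hat g$ at boundary directions of the cone $\mathbb{R}^n_{\le 0}\setminus\{0\}$, which the paper passes over silently.
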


\begin{proof}
The condition (2) means that $\hat{f}(w) \geq \hat{g}(w)$ for every $w \in \mathbb{Q}^n_{<0}$ (for a fixed $a \in C_0$).
By convexity of $\hat{f}$ and $\hat{g}$, this inequality holds for every $w \in \mathbb{R}^n_{<0}$.
Thus it follows from Lemma \ref{newtonbody}.
\end{proof}

\begin{proof}[Proof of $(2) \Longrightarrow (1)$]
We have that $P(f,g+c) = (\max(f^*,g^*-c))^*$ and $f = f^{**}$ (on $C_0$) thus it is sufficient to prove that $\lim_{c\to+\infty} \max(f^*,g^*-c) = f^*$ pointwise because convex conjugate is continuous along the decreasing sequence.
We have that $f^*$ is finite on the interior of $\overline{\Gamma(f)}$.
Therefore, from the preceding lemma, $g^*$ is finite on the interior of $\Gamma(f)$.
It follows that $\max(f^*,g^*-c) \downarrow f^*$ on the interior of $\Gamma(f)$.
Let $Q:=\lim_{c \to +\infty}{\max(f^*,g^*-c)}$.
We shall show that $Q^* = f^{**}$. By the relation $Q^* = Q^{***}$, it is sufficient to show that $f^* = Q^{**}$.
We have that $Q \geq f^*$, $f^*$ is lower semi-continuous, and $Q=f^*$ on ${\rm int}(\Gamma(f))$.
We also have that $Q=f^* = +\infty$ on $\mathbb{R}^n \setminus \Gamma(f)$.
Therefore, $Q$ and $f^*$ are equal outside $\partial \Gamma(f)$.
Since $Q$ is convex, $Q^{**}$ is obtained as a lower semi-continuous regularization of $Q$, thus we have $Q^{**} = f^*$ and the proof is finished.
\end{proof}

\subsection{An example}
\begin{example}\label{example}
We consider the following subharmonic function 
\[u^\alpha(z):= -(-\log |z|)^\alpha\]
on a disc $\Delta_{1-\epsilon}:=\{|z|<1-\epsilon\}$ in $\mathbb{C}$ for $0<\alpha\leq 1$.
The Lelong number is $\nu(u^\alpha,0) = 0$ when $\alpha<1$ and $\nu(u^\alpha,0)=1$ when $\alpha = 1$.
We consider a geodesic between $u_0 = 0$ and $u_1 = u^\alpha$.
Then, by Theorem \ref{geodlel}, $u_t \to 0$ in capacity if and only if $\alpha<1$.

By a straightforward computation, we have that the $L^1$-energy of $u^\alpha$ defined by $E_1(u):=\int (-u^\alpha) dd^c u^\alpha$ is finite when $\alpha<1/2$ and infinite when $\alpha \geq 1/2$.
Thus we have an example also in the infinite energy case.
\end{example}

\bibliographystyle{plain}

\end{document}